\title{Phase transition in random distance graphs on the torus}
\author[1, 2]{\normalsize Fioralba Ajazi}
\author[1 ]{\normalsize George M. Napolitano}
\author[1, 3]{\normalsize Tatyana Turova}
\affil[1]{\small Department of Mathematical Statistics, Faculty of Science, Lund University, S\"olvegatan 18, 22100, Lund, Sweden.}
\affil[2]{\small Department of Marketing and
	Operations Management, Faculty of Business and Economics, University of Lausanne, CH-1015, Switzerland.}
\affil[3]{\small Institute of Mathematical Problems of Biology, Russian Academy of Sciences, Institutskaja str., 142290, Pushchino, Moscow Region, Russia.}
\date{}                     
\newtheorem{theorem}{Theorem}[]
\newtheorem{prop}{Proposition}[]
\newtheorem{lemma}{Lemma}[]
\newtheorem{rem}{Remark}[]
\date{}
\newcommand\E{\operatorname{\mathbb E{}}}
\renewcommand\P{\operatorname{\mathbb P{}}}
\newcommand\Po{\operatorname{Po}}
\newcommand\Bin{\operatorname{Bin}}
\newcommand{\eqd}{\stackrel{d}{=}}
\begin{document}
%\pagecolor{Pink}

\maketitle 

\begin{abstract}
We apply here methods  of inhomogeneous random graphs to
a class of random distance graphs. This provides an example  outside of the rank 1 models which is still solvable as long as the largest connected component is concerned. 
 In particular, we show that some random distance graphs  behave exactly as 
 the  classical Erd\H{o}s-R\'{e}nyi model not only in the supercritical phase (as was already known) but in the subcritical case as well.

\end{abstract}

\section{Introduction }

Random distance graphs, that is random graphs where  the vertices are in a metric space, and the connection probabilities  depend on the distance between the vertices, form a particular subclass of a general inhomogeneous random graph models \cite{irg}. The theory of inhomogeneous random graphs was founded in the
 seminal paper \cite{irg}, where the authors gave a common ground for  
numerous models introduced and studied previously almost independently. 

The graphs 
treated in \cite{irg} 
have the following common feature: 
 the edges are independent and the probability of edges, is, roughly speaking, of 
order $1/n$, where $n$ is the size of the graph. Shortly, each out of $n$ vertices is assigned a type, i.e., a value in some separable metric space ${\cal S}$. Given a set of such values $\{x_1, \ldots, x_{n}\}$ 
any
two vertices $i$ and $j$ are connected  with a probability
\begin{equation}\label{pIRG}
 p_{n}(i,j)=\min\left\{\frac{\kappa (x_i,x_j)}{n},1\right\},
\end{equation}
where $\kappa $ is a symmetric
nonnegative measurable function.

Random distance graphs assume usually that the vertices are in 
 $\mathbb{R}^d$  or $\mathbb{Z}^d$. Hence, their coordinates represent the types in the terminology of \cite{irg}.
The probability of the connections between any two vertices, say $u,v \in \mathbb{R}^d$,
in these models is a function of the distance between them, e.g.,  it decays with the distance (e.g., \cite{koz}, \cite{bp}, or percolation models \cite{penrose}, \cite{sp}), or simply it can be a step function (e.g., \cite{avin}). 
These models 
are often designed
to describe models of real world, as, e.g., various biological or social networks, where
the frequency, or the probability of connections between two vertices 
often depend on the distance. 
Therefore, the analysis of such models often focuses on properties such as diameter (\cite{avin}, \cite{koz}), cluster coefficient  (\cite{avin}), 
propagation of a bootstrap percolation processes  (\cite{koz}, \cite{bp}).

First models of spread-out percolation (see \cite{P} and \cite{penrose}, in particular) were studied even earlier than the theory in \cite{irg} was developed.

The relation between random graphs (on a finite set of vertices) and some spread-out percolation models (on a countable set of vertices) was exemplified in particular, by works \cite{penrose} and \cite{sp}. 
In \cite{sp}, the results of \cite{penrose} on the critical parameters for the existence of an infinite cluster in a class of spread-out percolation
where further generalized, and shown  to be a direct consequence of the general theory of inhomogeneous random graphs \cite{irg}.

Inspired by the models of inhomogeneous graphs \cite{irg}
the authors of \cite{rvdh} generalized  the model of \cite{AN} by introducing more inhomogeneity in the probabilities of connections (see details below). 

Most of the investigations of the random distance graphs are being done without much application of the theory \cite{irg} (not counting the Example 4.6 in 
\cite{irg} itself,
paper \cite{sp} is almost 
an exception). The reason, perhaps, 
 is that although the random distance graphs are 
examples of a general model \cite{irg}, they 
belong to the most complicated subclass 
of these models being outside of the so-called rank-1 case. Note that while the theory of \cite{irg} gives us the critical parameters for the emergence of the giant component and even describes the size of this component in the supercritical phase,  
there is no yet a general theory for 
the subcritical phase. 
The subcritical phase of the models which are not of rank 1
was studied only for some particular subclasses (see \cite{T1}), which do not include the present model. Furthermore, the critical phase was studied so far only 
for the rank-1 cases (\cite{A}, and later \cite{BHL}, \cite{BHL1}, and \cite{T11}).

Here we consider a model 
recently introduced in \cite{koz}. As we demonstrate below, it is 
 ``a simple homogeneous'' case (as Example 4.6 in \cite{irg}). 
We generalize model of  \cite{koz} in spirit of work \cite{rvdh}. This makes the model essentially inhomogeneous but still allows to get the exact 
asymptotics for the size of the largest connected component.

\section{The Model}
\label{sec:model}

Let  $N>1$ and let ${V}_N$ denote the set of vertices in the
2-dimensional discrete torus
$\mathbb{T}^2_N = (\mathbb{Z}/N\mathbb{Z})^2=\{1, \ldots, N\}^2$. 
Define 
the graph distance $d(u,v)$
between two vertices $u=(u_1,u_2)$ and $v= (v_1,v_2)$ in 
${V}_N$  
as follows 
\begin{equation}
	d(u,v) = d_N(|u_1-v_1|) + d_N(|u_2-v_2|),
\label{disT}
\end{equation}
where
\begin{equation*}
	d_N(i) = 
	\begin{cases}
		i, & 0 \leq i \leq N/2, \\
		N - i, & N/2 < i < N,
	\end{cases}
%\label{eq:}
\end{equation*}
for $i \in \{0,\dots,N-1\}$.

Let $W$ be a nonnegative random variable, and let $W_v$, $v\in {V}_N$,
be $i.i.d.$ copies of $W$. 
Given the values $W_v$, $v\in {V}_N$, assume 
that between any two vertices $u,v \in {V}_N$ 
an edge is present independently of others and with probability
\begin{equation*}\label{p}
	p(u,v) = \min \left\{c \frac{W_u W_v}{N d(u,v)}, 1 \right\}, 
%\label{eq:}
\end{equation*} 
where $c > 0$ is a parameter.

Denote $G_{N,W}$ the resulting graph on ${V}_N$. 

In the case of the constant $W\equiv 1$  it is exacly the graph
 introduced and studied recently in \cite{koz}. 
This model is inspired by the study of neuronal networks (see references in \cite{koz}). It also
has common features with other random graph models introduced previously in that 
area, see e.g., \cite{T}, where the same dynamics of activation was also approximated using the mean-field approach. On the other hand, this model  is closely related to the models considered in \cite{penrose}, \cite{sp} and \cite{rvdh}. 

The work \cite{koz} focuses on a bootstrap percolation process, which in this case models  the spread of the activation in a neuronal tissue. 
The authors of \cite{koz} also study the underlying graph and derive the order of its diameter, thus extanding the results in \cite{irg} for the graph with unbounded number of types.

Here we describe the phase transition in the  largest connected component.
The relations between our random graph 
and the percolation model of
\cite{rvdh} are exactly as shown  in \cite{sp} for the homogeneous case of random distance graphs.

\section{Results}

It was disclosed already in \cite{irg} (Example 4.6)  that in the  supercritical case 
a homogeneous distance graph has the same asymptotics for the size
of largest connected component as in the  
 classical Erd\H{o}s-R\'{e}nyi model. 
We prove that this result holds for 
 the subcritical case as well.

\begin{theorem}
\label{T1}
Let $G_N$ denote a random graph on ${V}_N$
with probability of connections 
\[p(u,v)=\min\left\{ \frac{c}{Nd(u,v)},1\right\}, \ \ \ \ \ \ u,v \in {V}_N,\]
and let $C$ denote the size of the largest connected component in $G_N$.
Set
\begin{equation*}\label{lacr}
\lambda=c \, 4\log 2.
\end{equation*}
Then  the following holds. 
\begin{itemize}
\item[i)]If  $\lambda<1$, we have that 
\begin{equation}\label{LLC conv}
 \frac{C}{\log (N^2)}\xrightarrow[]{P}\frac{1}{\lambda-1-\log \lambda}, \quad \text{as $N \rightarrow \infty  $}. 
\end{equation}
\item[ii)]  If $\lambda>1$ then
\begin{equation*}
	\frac{C}{N^2}\xrightarrow[]{P}\beta_\lambda, \quad \text{as $N \rightarrow \infty $},
\end{equation*}
where $\beta= \beta(\lambda)$ is the positive solution of $\beta=1-e^{\lambda \beta} $.
\end{itemize}
\end{theorem}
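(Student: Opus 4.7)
The proof rests on identifying the right ``effective'' mean degree of the model. By vertex-transitivity of the torus, the sum of edge probabilities out of any fixed vertex $u$ equals
\[
\sum_{v\neq u} \frac{c}{Nd(u,v)} \;=\; \frac{c}{N}\sum_{k=1}^{N-1}\frac{N_k}{k},
\]
where $N_k$ is the number of $v$ at distance $k$ from $u$. Using $N_k \sim 4k$ for $1\le k\le N/2$ and $N_k\sim 4(N-k)$ for $N/2<k<N$, the right-hand side converges to $4c\log 2 = \lambda$. Once this is in hand, part (ii) follows directly from the general theory of inhomogeneous random graphs: by vertex-transitivity the model is of the ``simple homogeneous'' type of Example~4.6 in \cite{irg}, and the cited results give the asymptotic fraction $\beta_\lambda$ in the giant component as the positive solution of $\beta = 1 - e^{-\lambda\beta}$.

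For the subcritical statement (i) the plan is to obtain matching upper and lower bounds of order $\log(N^2)/I(\lambda)$, with $I(\lambda) = \lambda - 1 - \log\lambda$, by comparison with a Poisson Galton--Watson process of mean $\lambda$. For the upper bound, I would run a breadth-first exploration of $C_v$ and observe that, regardless of which vertex $w$ is currently being explored, the number of new edges uncovered at $w$ is a sum of independent Bernoulli variables with total mean at most $\lambda+o(1)$. This is stochastically dominated by $\Po(\lambda^+)$ with $\lambda^+ = \lambda+o(1) < 1$, and hence $|C_v|$ is dominated by the total progeny $Z$ of a subcritical Poisson branching process. A standard Cram\'er-type tail estimate gives $\P(Z \geq k) \leq e^{-I(\lambda^+) k (1+o(1))}$, and taking $k = (1+\varepsilon)\log(N^2)/I(\lambda)$ together with a union bound over $V_N$ completes the upper half of~(\ref{LLC conv}).

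For the matching lower bound I would apply a second moment method to
\[
X = \#\{v \in V_N : |C_v| \geq k_-\}, \qquad k_- = (1-\varepsilon)\log(N^2)/I(\lambda).
\]
The first moment is handled by showing that the BFS from $v$, truncated after revealing $k_-$ sites, is within total variation $o(1)$ of a true $\Po(\lambda)$ Galton--Watson tree: since $k_-$ is only logarithmic in $N$, each already-revealed site removes at most $O(1/N)$ from the sum of remaining edge probabilities out of any explored vertex, so the cumulative drift vanishes. This gives $\P(|C_v| \geq k_-) = e^{-I(\lambda)k_-(1+o(1))}$ and hence $\E X \to \infty$. The second moment $\E X^2$ is controlled by coupling the joint truncated exploration from two vertices $v, v'$ with two independent Poisson branching processes, exploiting that the expected number of edges between the two growing clusters is $O((\log N)^2/N^2) = o(1)$.

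The main obstacle will be this second moment estimate. Unlike in the classical Erd\H{o}s--R\'enyi model, the joint exploration here cannot ignore positions: a single long-range edge has probability only $O(1/N)$, but there are $\Theta(N^2)$ candidate pairs, so the expected number of long edges touching the two explored sets is itself a quantity that must be controlled uniformly in $(v,v')$. Handling this spatial dependence, and in particular showing that typical pairs $(v,v')$ contribute a product covariance while atypical pairs contribute $o(\E X^2)$, is where the spatial structure of the torus enters nontrivially, and it is what distinguishes this argument from a direct appeal to the general subcritical theory of \cite{T1}.
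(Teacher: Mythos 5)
Your treatment of part (ii) and of the upper bound on $C$ in part (i) (stochastic domination of the exploration by a $\Po(\lambda+o(1))$ branching process, Chernoff bound, union bound over $N^2$ vertices) coincides with the paper's. For the lower bound on $C$, however, you take a genuinely different route. You propose a second moment method on $X=\#\{v:|C_v|\ge k_-\}$, whereas the paper never computes a second moment: it decomposes $G_N$ into disjoint components $C_{\tilde v_1},\dots,C_{\tilde v_M}$ found sequentially, bounds $\P\{C\le k\}$ by a product of $M_1\approx A_N\log N$ conditional probabilities $\P\{|\tilde C_u|\le k\}\le 1-A_N^{-1}(1+o(1))$, where $\tilde C_u$ is explored on $V_N$ minus the $o(N^2)$ previously used vertices, and concludes $(1-A_N^{-1})^{M_1}=o(1)$. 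The price of the paper's route is a deletion-stability lemma (their Lemma 1): for the spatial kernel $c/(Nr)$ one must show that removing \emph{any} $o(N^2)$ vertices changes the mean offspring $\frac{c}{N}\sum_r n_r/r$ only by $o(1)$, which is not obvious because the deleted vertices could in principle crowd the small-$r$ shells; the proof uses $N_r\le 4r$. Your route avoids deleting $o(N^2)$ vertices (only the $O(\log N)$ vertices of the truncated BFS), so your first moment estimate is easier, but you then owe the correlation control. On that point you have misdiagnosed the difficulty: the cross terms in $\E X^2$ do not require any spatial analysis. For $v'\notin C_v$, conditioning on $C_v$ leaves the graph on $V_N\setminus C_v$ independent of the revealed edges, and by monotonicity $\P\{|C_{v'}|\ge k_-,\,v'\notin C_v\mid C_v\}\le\P\{|C_{v'}|\ge k_-\}$, so $\E X^2\le(\E X)^2+O(\E X\cdot\log N)=(1+o(1))(\E X)^2$ once $\E X=N^{2\varepsilon'}\to\infty$; Chebyshev then finishes. (Also note a small slip: the expected number of edges between two explored sets of size $O(\log N)$ is $O((\log N)^2/N)$, not $O((\log N)^2/N^2)$, since $p(u,v)\le c/N$; it is still $o(1)$.) With that observation your plan closes, and is arguably more elementary than the paper's; the paper's sequential-component argument, borrowed from the rank-1 analysis of the subcritical inhomogeneous case, has the advantage of generalizing directly to the weighted model of Theorem 3, where second moments of $X$ are harder to control under only an exponential moment assumption on $W$.
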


As we noted above, only the case $ii)$ of this theorem follows by the results of \cite{irg}.  

\begin{rem}One may observe a certain redundancy here, as the statements 
$ii)$ and $i)$ of
Theorem \ref{T1} are particular cases of the following below Theorems \ref{T2}
and \ref{T3}, respectively. However, stated separately
Theorem \ref{T1} makes it clear that  asymptotically the largest connected component in $G_N$ behaves as the one in the 
Erd\H{o}s-R\'{e}nyi graph $G_{n,p}$ with $n=N^2$ and $p=\lambda/n$. Furthermore, it is plausible to conjecture (but we do not prove it here) that even in the critical case the model has the same asymptotics for the largest component as the one in $G_{n,p}$ with $p=1/n$.
\end{rem}

The following theorem treats the general case (\ref{p}).

\begin{theorem}
\label{T2}
Assume, that 
\begin{equation}\label{cW}
\mathbb{E}W^2= \int_0^{\infty}   x^2  \mu_W(dx)<\infty.
\end{equation} 
Let $C(G_{N, W})$ denote the size of its  largest connected component in $G_{N, W}$, and denote again 
\[\lambda=c \, 4\log 2.\]
Then 
\begin{equation}\label{O41}
 \frac{C(G_{N, W})}{N^2}\xrightarrow[]{P} \int_0^{\infty}  \beta(x)  \mu_W(dx)=:{\hat \beta},
\end{equation}
where $\beta(x)$ is the maximal solution to 
\begin{equation}\label{O38}
 f(x)=1-e^{x\lambda \int_0^{\infty}  y f(y)  \mu_W(dy)}.
\end{equation}
Furthermore, ${\hat \beta}>0 $ if and only if
\begin{equation}\label{lacr1}
 \lambda \mathbb{E}W^2 >1.
\end{equation}
\end{theorem}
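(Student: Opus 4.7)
The plan is to view $G_{N,W}$ as an inhomogeneous random graph in the sense of \cite{irg}. Each vertex $u\in V_N$ receives the type $s_u=(u/N,W_u)\in\mathcal{S}:=\mathbb{T}^2\times[0,\infty)$, and the ground measure is $\mu=\mathrm{Leb}_{\mathbb{T}^2}\otimes\mu_W$. Writing $d(u,v)=N\tilde d(u/N,v/N)$, where $\tilde d$ is the natural distance on the continuous torus $[0,1]^2$, the edge probability takes the form $p(u,v)=\min\{\kappa(s_u,s_v)/n,1\}$ with $n=N^2$ and kernel
\[\kappa\bigl((x,w),(y,w')\bigr)=\frac{c\,w w'}{\tilde d(x,y)}.\]
My first task is to check that $(\kappa,\mu)$ defines a graphical vertex space in the sense of \cite{irg}: the spatial marginal of the empirical type distribution converges to Lebesgue measure by a Riemann-sum argument, and the weight marginal converges by the law of large numbers, so the joint empirical measure tends in probability to $\mu$.

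Next I would verify the integrability and spectral-theoretic hypotheses of \cite{irg}. The kernel is symmetric, nonnegative and continuous away from the diagonal, and its singularity at $\tilde d=0$ is $\mu^{\otimes 2}$-negligible. A direct computation gives
\[\int_{\mathbb{T}^2}\frac{dy}{\tilde d(x,y)}=4\log 2\]
for every $x$, so $\iint\kappa\,d\mu^{\otimes 2}=\lambda(\mathbb{E}W)^2<\infty$ under hypothesis \eqref{cW}. The spectral radius of the integral operator $T_\kappa$ is then identified by exhibiting $f(x,w)=w$ as an eigenfunction:
\[T_\kappa f(x,w)=c w\cdot 4\log 2\cdot\mathbb{E}W^2=\lambda\,\mathbb{E}W^2\cdot w,\]
and by the Perron--Frobenius arguments of \cite{irg} applied to the strictly positive, irreducible kernel $\kappa$ this yields $\|T_\kappa\|=\lambda\,\mathbb{E}W^2$. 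Hence $\|T_\kappa\|>1$ is precisely the critical condition \eqref{lacr1}.

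I would then invoke the main theorem of \cite{irg} to conclude $C(G_{N,W})/N^2\xrightarrow[]{P}\int_\mathcal{S}\beta(s)\,d\mu(s)$, where $\beta$ is the maximal nonnegative solution to $\beta(s)=1-\exp\{-\int\kappa(s,s')\beta(s')\,d\mu(s')\}$, and that the limit is strictly positive exactly when $\|T_\kappa\|>1$. Translation invariance of $\tilde d$ on the torus forces the maximal solution to be independent of the spatial coordinate, so the ansatz $\beta(x,w)=f(w)$ reduces the fixed-point equation to
\[f(w)=1-\exp\bigl(-w\lambda\,\mathbb{E}[Wf(W)]\bigr),\]
which is the equation \eqref{O38}; integrating against $\mu_W$ then yields $\hat\beta$ as in \eqref{O41}.

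The principal obstacle is that $\kappa$ is \emph{unbounded}, both through the $1/\tilde d$ singularity and, when $W$ has unbounded support, through the weights. The main theorem of \cite{irg} is therefore not directly applicable and must be reached via a truncation argument with $\kappa^{(K)}=\kappa\wedge K$, exploiting the $L^1$-bound $\iint\kappa\,d\mu^{\otimes 2}<\infty$ to pass to the limit $K\to\infty$. A secondary point is to verify that the deterministic grid structure of $V_N$ together with the i.i.d.\ weights satisfies the graphical-convergence condition uniformly enough to tolerate the spatial singularity; this is also where the moment assumption $\mathbb{E}W^2<\infty$ enters, as it guarantees that $f(w)=w$ lies in $L^2(\mu_W)$, making $T_\kappa$ a bounded operator with well-defined spectral radius.
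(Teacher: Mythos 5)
Your proposal is correct and, for the main convergence statement, follows essentially the same route as the paper: identify $G_{N,W}$ with the inhomogeneous random graph $G^{\mathcal V}(N^2,\kappa)$ on $\mathcal S=\mathbb T^2\times\mathbb R_+$ with the product kernel $\kappa_1(u,v)\,xy$, check that the kernel is graphical on this generalized vertex space, apply Theorem 3.1 of \cite{irg}, and use translation invariance on the torus to reduce the survival equation to (\ref{O38}). Your computation $\int_{\mathbb T^2}\rho(u,v)^{-1}dv=4\log 2$ matches the paper's identification of $\lambda$.

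Where you genuinely diverge is the proof that $\hat\beta>0$ iff $\lambda\,\mathbb EW^2>1$. You compute $\|T_\kappa\|$ directly, exhibiting $f(u,w)=w$ as a positive eigenfunction with eigenvalue $\lambda\,\mathbb EW^2$ and appealing to Perron--Frobenius/spectral arguments (equivalently, one can factor $\|T_\kappa\|=c\,\|T_{\kappa_1}\|\,\|T_{\kappa_2}\|$, with the convolution operator $T_{\kappa_1}$ having norm $\int\kappa_1=4\log2$ and the rank-one operator $T_{\kappa_2}$ having norm $\mathbb EW^2$). The paper explicitly mentions this route but deliberately avoids it: instead it compares the multi-type branching process $\mathcal B_1(x)$ with a homogeneous Galton--Watson process $\mathcal B_2$ whose offspring law is compound Poisson with the size-biased weight $\widetilde W$, using the identity in distribution of total progenies from \cite{T2}; positivity of survival then reduces to the elementary condition $\mathbb E\tilde Y=\lambda\,\mathbb EW^2>1$, and $\|T_\kappa\|=\lambda\,\mathbb EW^2$ is obtained as a corollary rather than an input. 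Your spectral route is shorter but requires justifying that the positive $L^2$ eigenfunction (which uses $\mathbb EW^2<\infty$) indeed realizes the norm of the unbounded-kernel operator; the paper's probabilistic route sidesteps all operator-theoretic issues. Finally, your closing worry about unboundedness of $\kappa$ is legitimate but already absorbed by \cite{irg}: Theorem 3.1 there holds for any graphical (not necessarily bounded) kernel, the monotone truncation $\kappa\wedge K$ being carried out inside that proof, so no additional truncation argument is needed on your side beyond verifying the graphical conditions, which the paper does via (\ref{O37}).
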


Note that  the critical parameter $\lambda \mathbb{E}W^2 $ in  Theorem \ref{T2} 
is similar (in fact it has exactly the same meaning of a certain average of the degree of a vertex) to the lower bound derived in Theorem 4.1 in \cite{rvdh} to 
provide the necessary conditions for the percolation. 

Theorem \ref{T2} follows essentially by the general theory of \cite{irg} as we explain below. 
It tells us  that the asymptotics of the 
largest component in 
$G_{N, W}$ behaves as  the one in the rank-1 random graph on $V_N$ defined by the following probabilities of connections between any $u,v\in V_N$
\begin{equation}\label{pr1}
	p_1(u,v) = \min\left\{\lambda \frac{W_u W_v}{N^2}, 1 \right\}.
%\label{eq:}
\end{equation}

In this case the largest connected component in the subcritical case is sensitive to the tail of the distribution of  $W$. It is known that in the rank-1 case the size of the largest component varies  between  polynomial (as in \cite{J2}) and logarithmic  (as in \cite{T2}) orders. This behaviour is projected to the model under consideration here. 
We shall state a particular case of logarithmic order to show  the similarities with Theorem \ref{T1}.

\begin{theorem}
\label{T3}
 Assume  that for some positive $\varepsilon$ 
\begin{equation}\label{cW2}
 \mathbb{E}e^{\varepsilon W}<\infty.
\end{equation}
If also 
\begin{equation}\label{as4}
\lambda \mathbb{E}W^2<1,
\end{equation}
 there exists a unique $y>1$ 
which satisfies
\begin{equation}\label{O36}
 y=\frac{1}{\lambda M }
\frac{\mathbb{E}\left( W e^{\lambda M(y-1)W }\right)}{
\mathbb{E}\left(W^2 e^{\lambda M(y-1)W }\right)},
\end{equation}
where $M=\mathbb{E}W$. Let $C$ be the size of the largest connected component of $G_{N,W}$. Then one has
\[
 \frac{C}{\log (N^2)}\xrightarrow[]{P}\frac{1}{\log \gamma}, \quad \text{as $N \rightarrow \infty  $},
\]
where
\[\gamma:= \frac{1}{\lambda
\mathbb{E}\left(W^2 e^{\lambda M(y-1)W }\right)}>1.\]
\end{theorem}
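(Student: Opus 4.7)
The plan is to reduce Theorem \ref{T3} to a subcritical rank-1 inhomogeneous random graph, and then read off the exponential decay rate of the total-progeny distribution of the associated mixed-Poisson branching process via a saddle-point calculation. By the same averaging argument that underlies Theorem \ref{T2} (integration of $c/(Nd(u,v))$ over the torus produces the factor $4\log 2$), the component structure of $G_{N,W}$ is asymptotically equivalent to that of the rank-1 graph on $n=N^2$ vertices with edge probabilities $\min\{\lambda W_uW_v/n,1\}$, and the question becomes the logarithmic-order asymptotics of the largest subcritical component of that rank-1 graph---exactly the setting covered by \cite{T2}. What remains to be shown is that the general rate function of \cite{T2} specialises in the present set-up to $\log\gamma$, with $y$ and $\gamma$ as in (\ref{O36}).

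First I would write down the mixed-Poisson multi-type branching process that sandwiches the cluster exploration from a typical vertex: conditioned on its type $w$, an individual produces $\Po(\lambda M w)$ children with $M=\mathbb{E}W$, and each child's type is drawn independently from the size-biased law $w\,\mu_W(dw)/M$. The mean offspring operator on type-space has spectral radius $\lambda\mathbb{E}W^2$, so the subcriticality assumption (\ref{as4}) ensures that the total progeny $T$ is a.s.\ finite, while the exponential-moment hypothesis (\ref{cW2}) guarantees that $T$ itself possesses some exponential moments.

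Next I would locate the radius of convergence of $\phi(s):=\mathbb{E}\,s^T$ by saddle-point analysis of the functional equation $\phi(s)=s\,\mathbb{E}\exp(\lambda MW(\psi(s)-1))$, where $\psi$ is the analogous generating function for the size-biased subtree. The singular point $s_c$ occurs where the derivative of the right-hand side with respect to $\psi$ equals $1$; setting $y=\psi(s_c)$ and performing the differentiation turns this condition into precisely (\ref{O36}), and identifies $1/s_c$ with the constant $\gamma$ defined in the theorem. Assumption (\ref{as4}) guarantees a unique such $y>1$, and an elementary estimate, using that $\lambda M\mathbb{E}(W^2)<1$ at $y=1$, yields $\gamma>1$.

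With the tail bound $\mathbb{P}(T\ge k)\asymp k^{-3/2}\gamma^{-k}$ in hand, the conclusion $C/\log(N^2)\xrightarrow{P}1/\log\gamma$ follows by the standard extremal argument: a union bound over the $n=N^2$ possible starting vertices controls the upper tail, while a second-moment argument on the number of vertices whose component attains a prescribed size supplies the matching lower bound. The step I expect to be delicate is the coupling between the true BFS exploration in $G_{N,W}$ and the idealised mixed-Poisson tree: one needs uniform control of the geometric inhomogeneity due to the torus distance (beyond the $4\log 2$-averaging used for Theorem \ref{T2}), of the depletion of each weight class along the exploration, and in particular of atypically large-weight vertices whose exploration could create spurious components---this last point being precisely what the exponential moment (\ref{cW2}), rather than only the second-moment condition (\ref{cW}), is needed to rule out.
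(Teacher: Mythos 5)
Your proposal follows essentially the same route the paper intends: the paper itself only sketches the argument by deferring to the exploration/branching-process comparison of Theorem \ref{T1}(i) combined with the rank-1 analysis of \cite{T2}, and your saddle-point computation correctly identifies (\ref{O36}) and $\gamma$ as the Cram\'er condition giving the total-progeny tail $\P(T\ge k)\asymp k^{-3/2}\gamma^{-k}$, while your union-bound upper tail and second-moment (disjoint-components) lower bound match the two halves of the paper's proof of Theorem \ref{T1}(i). The only soft spot is the claim that (\ref{as4}) alone guarantees a unique $y>1$: uniqueness follows from concavity of $\log y-\log\E e^{\lambda M(y-1)\widetilde{W}}$, but existence of an interior critical point also relies on (\ref{cW2}) and on the behaviour of the moment generating function of $W$ near its abscissa of convergence.
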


\section{Proofs}

\subsection{Random distance graph via inhomogeneous random graphs}

Rescale the set ${V}_N$ as follows
 $${V}_N \rightarrow {\widetilde V}_N  = \{(u_1/N,u_2/N): (u_1,u_2)\in {V}_N\}.$$
Hence,  ${\widetilde V}_N$ is a subset of $N^2$ vertices in a
 continuous torus $\mathbb{T}^2:=(\mathbb{R}/\mathbb{Z})^2$. 
Let $\mu_{\cal L}$ denote the Lebesgue measure on $\mathbb{T}^2$, and let 
$\mu_{W}$ be the Borel measure on $\mathbb{R}_+$ induced by the random variable $W$. 
Denote $\mathcal{S}:=\mathbb{T}^2 \times \mathbb{R}_+$, and define
a product measure $\mu = \mu_{\cal L} \times \mu_{W} $ on this space. 
Then the triple $\mathcal{V}:=(\mathcal{S}, \mu, \{(v,W_v): v\in {\widetilde V}_N\})$ satisfies the definition of a generalized vertex space of \cite{irg}, i.e., for any
Borel  set $A\subseteq {\cal S}$ the following convergence holds:
\begin{equation*}
\frac{|\{v: (v,W_v) \in A \}|}{N^2}\xrightarrow{P}\mu(A).
\end{equation*}

Define now  for $u\neq v, \ u,v \in \mathbb{T}^2 $ the kernel function
\begin{equation}\label{ka}
\kappa_1(u,v):= \frac{1}{\rho(u,v)}, 
\end{equation}
where for any $u=(u_1, u_2), v=(v_1, v_2) \in  \mathbb{T}^2$
\[\rho(u,v)  =\rho_1(|u_1-v_1|)+\rho_1(|u_2-v_2|) \]
with
\begin{equation*}
\rho_1(a)  =
	\begin{cases}
a, & 0\leq a\leq \frac{1}{2}, \\ 
1-a, & \frac{1}{2}< a\leq 1.
	\end{cases}
%\label{eq:}
\end{equation*}
Denote also here $\kappa_2(x,y):= xy$ a product function on $\mathbb{R}^2_+$.

Finally, using the kernel 
\begin{equation}\label{ka1}
\kappa((u,x),(v,y)):= \kappa_1(u,v)\kappa_2(x,y),
\end{equation}
$(u,v), (x,y) \in \mathcal{S},$
we define a random graph $G^\mathcal{V}\left(N^2, \kappa\right)$ 
(we follow the notation of \cite{irg}) 
on the set of vertices $ {\widetilde V}_N $ with given types 
\[{\widetilde {\bf V}}_N:=\{(v,w_v), v \in {\widetilde V}_N \} = \{(v_i,w_i)\}_{i=1}^{N^2}\subset \mathcal{S}\]
by setting an independent  edge between any pair of vertices 
${\bf x}_i, {\bf x}_j \in {\widetilde {\bf V}}_N$ 
 with probability (recall eq. (\ref{pIRG}))
\begin{equation}\label{pIRG1}
\widetilde{p} \left({\bf x}_i, {\bf x}_j \right) := 
\min 
\left\{ 
c \frac{\kappa\left( {\bf x}_i, {\bf x}_j\right)}{N^2}  ,1 
\right\}.
\end{equation}

\begin{prop}\label{p1}
The model $G_N$ is equivalent to the inhomogeneous
random graph model $G^{\mathcal{V}}(N^2, \kappa)$.
\end{prop}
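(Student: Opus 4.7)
The statement is essentially a bookkeeping identification: on the same vertex set (via the rescaling bijection $u\mapsto u/N$), the two models have identical edge probabilities, and the rescaled vertex-type data satisfy the generalized vertex space axiom of \cite{irg}. The plan is therefore to verify these two facts in turn.

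First I would set up the bijection $\Phi: V_N \to \widetilde V_N$, $u\mapsto u/N$, and transport the types by assigning $W_u$ to $\Phi(u)$. The main computational step is to check that the graph distance rescales correctly: using the definition of $d_N$, for $i\in\{0,\dots,N-1\}$,
\begin{equation*}
\tfrac{1}{N}d_N(i)=\begin{cases}i/N,& i/N\le 1/2,\\ 1-i/N,& i/N>1/2,\end{cases}
\end{equation*}
which is exactly $\rho_1(i/N)$. Consequently $d(u,v)=N\rho(u/N,v/N)$ for all $u\ne v$ in $V_N$. Plugging this into the original connection probability,
\begin{equation*}
p(u,v)=\min\!\left\{\frac{c\,W_uW_v}{N\cdot N\rho(u/N,v/N)},1\right\}=\min\!\left\{\frac{c\,\kappa((u/N,W_u),(v/N,W_v))}{N^2},1\right\},
\end{equation*}
which is precisely $\widetilde p$ applied to $\Phi(u),\Phi(v)$ with the kernel (\ref{ka1}). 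Since edges are independent in both models, the joint laws of the two edge families coincide.

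Next I would verify that $\mathcal{V}=(\mathcal{S},\mu,\{(v,W_v):v\in\widetilde V_N\})$ is a generalized vertex space, i.e.\ that for every Borel $A\subseteq\mathcal{S}$,
\begin{equation*}
\frac{|\{v\in\widetilde V_N:(v,W_v)\in A\}|}{N^2}\xrightarrow{P}\mu(A).
\end{equation*}
By a standard $\pi$--$\lambda$ argument it suffices to treat product sets $A=B\times C$ with $B\subseteq\mathbb{T}^2$ and $C\subseteq\mathbb{R}_+$ Borel. The deterministic grid $\widetilde V_N$ equidistributes in $\mathbb{T}^2$, so $|\{v:v\in B\}|/N^2\to\mu_{\mathcal L}(B)$; conditional on the spatial positions, the weights $W_v$ are i.i.d.\ with law $\mu_W$, so by the weak law of large numbers the fraction with $W_v\in C$ converges in probability to $\mu_W(C)$. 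Combining these two limits gives the required product measure $\mu(A)=\mu_{\mathcal L}(B)\mu_W(C)$.

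There is no real obstacle here, only the need to be careful with the slight mismatch between the half-open cases $d_N(N/2)=N/2$ and $\rho_1(1/2)=1/2$ (they agree) and the fact that in both models self-loops are excluded. Once the kernel identity and the vertex-space convergence are in place, the equality in distribution $G_N\stackrel{d}{=}G^{\mathcal V}(N^2,\kappa)$ follows immediately from the coupling described above.
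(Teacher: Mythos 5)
Your proposal is correct and follows essentially the same route as the paper: the core step in both is the identity $d(u,v)=N\rho(u/N,v/N)$, which turns the connection probability of $G_N$ into $\widetilde p$ evaluated at the rescaled typed vertices, and independence of edges then gives equality of the two graph laws. The additional verification of the generalized vertex-space convergence is fine but not part of the paper's proof of the proposition (the paper asserts that property in the surrounding setup), so your write-up simply supplies a bit more detail than needed.
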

\begin{proof}
Setting 
$\widetilde{v}=v/N$ and  $w_v=w_{\widetilde{v}}$
 for any $ v \in {V}_N$,   the probability of connection  (\ref{p})  is equivalent to 
\begin{equation}\label{eq1}
	p(u,v) = \min 
\left\{c \frac{w_u w_v}{N d(u,v)}, 1 
\right\}=\min 
\left\{c \frac{\kappa_1\left(\widetilde{u}, \widetilde{v} \right) w_{\widetilde{u}} w_{\widetilde{v}}}{N^2} , 1 
\right\}
\end{equation}
\[
=\widetilde{p}\left( (\widetilde{v}, w_{\widetilde{v}}),
(\widetilde{u}, w_{\widetilde{u}}) \right).
\]
Hence, 
there is a connection between any two vertices $ u,v\in  {V}_N$  of $G_N$,
if and only if there is a connection between the corresponding vertices 
$\widetilde{u}=u/N, \widetilde{v}=v/N$ of $G^{\mathcal{V}}(N^2, \kappa)$. 
\end{proof}

It is straightforward to check  that the  kernel $\kappa$ is \emph{graphical}, 
since it is continuous,
$\kappa \in L^1(\mathcal{S}\times \mathcal{S},\mu\times \mu)$, and the
number of edges in the graph
$e(G^\mathcal{V}(N^2,\kappa))$ satisfies the following convergence
\begin{equation}\label{O37}
\frac{1}{N^2}\E e(G^\mathcal{V}(N^2,\kappa))\rightarrow \frac{1}{2}\int_{\mathcal{S}}\int_{\mathcal{S}} \kappa(u,v)dudv.
\end{equation}

\subsection{Proof of Theorem \ref{T2}}

Proposition \ref{p1} together with  (\ref{O37})  enables  us to apply some of the results of \cite{irg}  to our case.
In particular, we can approximate the size of the connected component by the total progeny of a multi-type Galton-Watson branching process $\cal B({\bf x})$, with type space $\cal S$, where 
the single ancestor has type ${\bf x}$, and 
the number of offspring of type ${\bf y}$ of each  individual of type ${\bf x}\in {\cal S}$
has Poisson distribution with intensity $\kappa({\bf x},{\bf y})\mu(d{\bf y})$. Denote here
$\beta_{\kappa}({\bf x})$ and ${\cal X}({\bf x})$, correspondingly,
the survival probability and the 
size of the total progeny of this branching process with the ancestor of type ${\bf x}$. 

Following $\cite{irg}$,  let us define the integral operator $T_{\kappa}$  
\[
(T_{\kappa} f)({\bf x}):= \int_\mathcal{S}\kappa({\bf x},{\bf y})f({\bf y})d\mu({\bf y}),
\]
for all measurable functions $f$ (when the integral is defined) on $\cal S$, and define the norm of 
$T_{\kappa}$ by 
\begin{equation}\label{deT}
\parallel T_{\kappa}\parallel :=\sup\{\parallel T_{\kappa}f\parallel_2 :f\geq 0, \parallel f \parallel_2\leq 1 \}.
\end{equation}
Then, by  Theorem 3.1 in  \cite{irg} (whose applicability here is granted by  
Proposition \ref{p1}) we immediately get
\begin{equation}\label{O40}
 \frac{C(G_{N, W})}{N^2}\xrightarrow[]{P} \int_{\cal S}  \beta_{\kappa}({\bf x})
 \mu(d{\bf x} ).
\end{equation}
Moreover,  it is also proved in \cite{irg} that the survival probability $\beta_{\kappa}$ is the maximal solution to 
\begin{equation}\label{O34}
 f({\bf x})=1-e^{-(T_{\kappa}f)({\bf x})}, \quad {\bf x}\in {\cal S},
\end{equation}
and that ${\hat \beta}_{\kappa}>0$ if and only if
\begin{equation}\label{conT}
\parallel T_{\kappa}\parallel >1.
\end{equation}

Observe that 
it follows directly from the  symmetry of our model that 
the survival probability $\beta_{\kappa}({\bf x})$ where ${\bf x}=(u,x)\in \mathbb{T}^2 \times \mathbb{R}_+$,
does not depend on $u\in \mathbb{T}^2$, but it is only a function of $x\in \mathbb{R}_+$.  Hence we shall simply write the survival probability as $\beta_{\kappa}({\bf x})=\beta_{\kappa}({ x})$, which by (\ref{O34}) is the maximal solution to 
\begin{equation}\label{surv}
 f({x})=1-e^{-\lambda \int_{0}^{\infty}xyf(y)\mu_W(dy)}, \ \ {x}\in {\mathbb{R}}_+,
\end{equation}
which is exactly equation  (\ref{O38}). This together with (\ref{O40}) yields
(\ref{O41}).

We are left to prove  (\ref{lacr1}). Firstly, one could use
straightforwardly the definition  (\ref{deT}) to derive
\begin{equation}\label{O39}
\parallel T_{\kappa}\parallel = \lambda \mathbb{E}W^2,
\end{equation}
which together with (\ref{conT}) would give us (\ref{lacr1}).
However, 
it is easier to derive  (\ref{lacr1})
directly relating the defined above multi-type branching process to a 
a certain homogeneous Galton-Watson process as it was explained, e.g., in \cite{T2}.

Let us introduce yet another branching process $\mathcal{B}_1(x)$ 
with type space $\mathbb{R}_+$, where the single ancestor has type $x$, 
and number of 
offspring of type $y\in \mathbb{R}_+$ of any individual of type $x \in \mathbb{R}_+$ has Poisson distribution with intensity $\lambda xy\mu_W(dy)$. By using the same analysis as for $\cal B$ we can define the survival probability $\beta_{\kappa}^1(x)$ of ${\cal B}_1(x)$ as the maximum solution of (\ref{surv}). Therefore  in notations of Theorem \ref{T2} it holds that 
$\beta({x})=\beta_{\kappa}^1(x)$, and
\begin{equation}\label{Nov1}
\beta_{\kappa}({\bf x})=\beta_{\kappa}^1(x)=\beta({x}), 
\end{equation}
 for any ${\bf x}=(u,x)$, and for all $u\in \mathbb{T}^2$.

Finally, we define one more auxiliary branching process  ${\cal B}_2$, this time a 
homogeneous Galton-Watson process. This process starts with one single ancestor, and its
offspring distribution $\tilde{Y}$ has a compound Poisson distribution 
\begin{equation*}
\Po( \widetilde{W} \lambda \E(W)),
\end{equation*}
where random variable 
 $\widetilde{W}$ has the following so-called "size-biased'', distribution:
\begin{equation*}
\mu_{\widetilde{W}}(dy):=\frac{y\mu_W( dy)}{\E W }.
\end{equation*}

  Let us denote   ${\cal X}_1(x)$ and ${\cal X}_2$ the total progeny of ${\cal B}_1(x)$ and ${\cal B}_2$,  respectively. It is proved in  Section $2.2$ in \cite{T2},  that ${\cal X}_1(\widetilde{W}) $, and ${\cal X}_2$ are equal in distribution, i.e.,
\begin{equation}\label{Nov2}
{{\cal X}}_1({\widetilde{W}})\stackrel{d}{=}{\cal X}_2.
\end{equation}
 Hence,  the critical parameters 
for the corresponding survival probabilities
for these two branching processes ${\cal B}_1(\widetilde{W})$ and ${\cal B}_2$ are the same. In the case of a homogeneous process ${\cal B}_2$ the necessary and sufficient condition for a positive 
survival probability 
is simply 
$\E(\tilde{Y})=\lambda \E(W^2)>1$. Therefore (\ref{Nov2}) yields that
$\P(\widetilde{{\cal X}}_1({\widetilde{W}})=\infty)>0$ if and only if 
$\lambda \E(W^2)>1$.

It follows by the properties of Poisson distribution that
 the type of a randomly chosen offspring of the ancestor 
in the process ${\cal B}_1(x)$
 has distribution 
$\widetilde{W}$. Hence, for any $x$ the process ${\cal B}_1(x)$ survives with a positive probability, i.e., $\beta_{\kappa}^1(x)>0$,   if ${\cal B}_1(\widetilde{W})$ survives with a positive 
probability. Since $\beta_{\kappa}^1$ is the maximal solution to (\ref{surv}), i.e., to 
 (\ref{O38}), it follows that $\hat{\beta}>0$  (see (\ref{O41})) in this case.

On the other hand, if ${\cal B}_1(\widetilde{W})$ survives with probability zero, then the
equality
\[ 0=\P(\widetilde{{\cal X}}_1({\widetilde{W}})=\infty)
= \int_{\mathbb{R}_+}\P(\widetilde{{\cal X}}_1({x})=\infty)\mu_{\widetilde{W}}(dx)
= \int_{\mathbb{R}_+}\beta_{\kappa}^1(x)\mu_{\widetilde{W}}(dx)\]
implies that $\beta_{\kappa}^1=0$ $\mu_{\widetilde{W}}$ -$a.s.$, and hence, $\mu_{{W}}$ $-a.s.$
Since $\beta_{\kappa}^1$ is the maximal solution to (\ref{surv}), i.e., to 
 (\ref{O38}), it follows that in this case $\hat{\beta}=0$. 

Summarizing we get that $\hat{\beta}>0$ if and only if $\lambda \E(W^2)>1$. In turn this yields $\|T_{\kappa}\|=\lambda \E(W^2)$. This proves the theorem.

\subsection{Proof of Theorem \ref{T1}}

\subsubsection{Exploration algorithm.}

Let $v$ be an arbitrarily fixed vertex of a graph $G$,  and let $C_v$ denote the connected component containing $v$. 
We use a standard procedure to reveal $C_v$, called exploration algorithm. This is defined as follows.

In the course of exploration the vertices of $G$ can be in one of
the  three states: \textit{active}, \textit{saturated} or \textit{neutral}. 
At time $i = 0$ the vertex $v$ is set to be active, while
  all the other vertices are set to be neutral. 
We denote by $S_i$ the set of active vertices at time $i$. Hence, $|S_0| = 1$. 
The state of a vertex changes during the exploration of $C_v$ as follows. 

At each time step $i \geq 1$, we choose an active vertex in
$S_{i-1}$ uniformly at random and denote it by $v_i$. Then, consider each neutral vertex $u$: if
$u$ is connected to $v_i$ then $u$ becomes active, otherwise $u$ stays neutral. 
After searching the entire set of neutral vertices the vertex $v_i$ becomes 
saturated. This finishes the $i $-th step of the exploration algorithm.

The process stops when there are no more active vertices, i.e. at the first 
time $i$ when $|S_{i}|=0$, that is at time
\begin{equation}
\label{defT}
T=\min\{i\geq 1: |S_{i}|=0 \}.
\end{equation}
At this time all considered vertices are saturated and they do not have any connection to the neutral vertices. Hence,  $C_v$ coincides with the set of saturated vertices, that is $|C_v|=T$.

Let $X_i$ denote the number of vertices becoming active at the $i$-th step. Then the following recursion holds 
\begin{equation}
\label{eq:rec_active}
\begin{split}
|S_0|&=1,\\
|S_i|&=|S_{i-1}|+X_i-1=X_1+\dots+X_{i}-(i-1).
\end{split}
\end{equation}
Correspondingly, we can rewrite  $T$, defined in eq. (\ref{defT}), as follows
\begin{equation}\label{defT1}
T=\min\{i \geq 1: X_1+\dots + X_i=i-1\}.
\end{equation}

\subsubsection{Subcritical case}

\label{sec:sub}

In this section we assume that  $\lambda < 1$ and we prove part $i)$ of Theorem \ref{T1}.

\paragraph{Lower bound.}
We start by finding a lower bound on $C$, the size of the largest connected component. Namely, we prove that for any positive $\varepsilon$
\begin{equation}\label{LLC low}
{\mathbb{P}}\left\{\frac{C}{\log N} \geq \frac{2}{1-\lambda-\log \lambda}+\varepsilon\right\}\longrightarrow 0, \text{ as $N\longrightarrow \infty$}.
\end{equation}

\begin{proof}
The proof is based on the exploration algorithm described above. We also use essentially the geometry of the discrete torus with the distance defined in eq. (\ref{disT}). 
Recall, in particular,  that the number $N_r$ of vertices at distance $r$ from any  given vertex, 
for $N$ odd is given by 
\begin{equation}\label{ndr}
N_r=\begin{cases}
4r, &  1\leq r\leq \lfloor N/2 \rfloor,\\
4(N-r), & \lfloor N/2 \rfloor <r\leq N;
\end{cases}
\end{equation}
 while for $N$ even, it is given by 
 \begin{equation}\label{ndr2}
N_r= \begin{cases}
4r, & 1 \leq r < N/2,\\
2(N-1), &  r=\frac{N}{2},\\
4(N-r), & N/2 < r < N,\\
1, & r=N.
 \end{cases}
 \end{equation}

Recall that the vertices becoming active at the \textit{i}-th  step are connected to the vertex $v_i$.
Let $X_{i,r}$ denote the number of vertices at distance $r$ from vertex $v_i$, which
become active at the $i$-th step.
Hence, 
\begin{equation}
	X_i = \sum_{r = 1}^N X_{i,r}.
\label{Xr}
\end{equation}
Let $U_i$ denote the number of active and saturated vertices at time $i$
(in other words, $U_i$ is  the number of vertices revealed by time $i$).
In particular, by  eq. (\ref{eq:rec_active}) we have
\begin{equation}
	U_i = |S_i| + i.
\label{eq:used_v}
\end{equation}
Correspondingly, for any vertex $u$ let $U_{i,r}(u)$ be the number  
of active and saturated vertices at time $i$, which are
 at distance $r$ from 
$u$. In particular, for any $i\geq 1$ and any vertex $u$ it holds that
\begin{equation*}
	\sum_{r=1}^{N}U_{i,r}(u)=U_{i}.
%\label{eq:}
\end{equation*}

The number $X_{i,r}$ depends on the number $U_{i-1,r}(v_i)$ of active and saturated vertices at time $i-1$ which are at distance $r$ from $v_i$, in the following way 
\begin{equation}\label{O20}
X_{i,r}|_{U_{i-1,r}(v_i)} \in \Bin(N_r-U_{i-1,r}(v_i),p_r),
\end{equation}
where we use the notation 
\begin{equation*}
	p_r=\frac{c}{Nr} = p(u,v), \quad \text{ if $d(u,v)=r$}.
%\label{eq:}
\end{equation*}

\begin{rem}\label{Rem1} In eq. (\ref{O20}) and elsewhere we write a random parameter for a distribution with the usual meaning that the distribution is defined conditionally on a given value of the parameter. 
\end{rem}

Let us introduce  the random variables  
\begin{equation*}
Z_{i,r} \in \Bin(U_{i-1,r}(v_i),p_r),
\end{equation*}
and 
\begin{equation*}
X_{i,r}^+ = X_{i,r} + Z_{i,r} \in \Bin (N_r,p_r).
\end{equation*}
Then, we define 
\begin{equation*}
X_i^+ := \sum_{r=1}^{N} X_{i,r}^+.
\end{equation*}

If a random variable $\xi$ stochastically dominates $\eta$ we denote this by $\eta \preceq \xi$.

It is clear from the above definition that $X_{i,r} \preceq X_{i,r}^+$, and, correspondingly,  $X_i \preceq X_i^+$. Therefore,  $$|S_i| \preceq S_i^+:= X_1^+ + \dots+ X_{i}^+ - (i-1).$$

Notice that the largest connected component has size larger than $k$ if and only if there is a component of size at least $k$. Let $|C_v| $ denote 
the size of the component $C_v$.
Then
\[
\P \{C\geq k\} = \P \{ \exists v : |C_v| \geq k \} = \P \{ \bigcup_{v \in V} \{|C_v| \geq k \}.
\]
It follows simply by the symmetry of the model that the random variables $|C_v|$
have equal distribution for all vertices $v$. This allows us to derive from the last formula the following bound
\begin{equation}\label{LLC}
\P \{C\geq k\}
\leq N^2 {\P}\{|C_v|\geq k \},
\end{equation}
for any arbitrarily fixed vertex $v$.

By the exploration algorithm we have 
that the probability for a component $C_v$ to be larger or equal to $k$ is 
equal to the probability of having active vertices in each of the $k-1$ steps of the exploration, Hence, 
\begin{equation}
\label{eq:Cv_bound}
\begin{split}
{\P}\{|C_v|\geq k \}&= \P\{|S_t| > 0, \forall t\leq k-1\}\\
&\leq \P\{S_t^+ > 0,\forall t\leq k-1\}\\
&\leq \P\{S_{k-1}^+ > 0  \}=\P\{\sum_{t=1}^{k-1} X_t^+ - (k-2)>0  \}.
\end{split}
\end{equation}

We use the coupling method described in \cite{lindvall} for finding stochastic bounds on $X_i^+$. We have that $X_{i,r}^+$ is stochastically bounded from above by a random variable $Y_{i,r} \eqd \Po(-N_r \log (1-p_{r})) $, i.e. $X_{i,r}^+ \preceq Y_{i,r}$. Therefore,  we can stochastically bound $X_i^+$ by a Poisson random variable, as follows
\begin{equation}\label{ln2}
	\begin{split}
		X_i^+ & \preceq \sum_{r=1}^{N} Y_{i,r} \in \Po\left(\sum_{r=1}^{N}-N_r \log(1-p_r)\right) = \Po(\lambda_N),
	\end{split}
\end{equation}
where 
\begin{equation}\label{ln}
 \begin{split}
\lambda_N&=\sum_{r=1}^{N}-N_r \log(1-p_r)=\sum_{r=1}^{N}N_r (p_r+o(p_{r}))\\
&=\sum_{r=1}^{\lfloor N/2 \rfloor} 4r(p_{r}+o(p_{r}))+\sum_{r=\lfloor N/2 \rfloor +1}^{N}4(N-r)(p_{r}+o(p_{r}))\\
&=\lambda -\frac{2c}{N} +o\left(\frac{1}{N}\right).
\end{split}
\end{equation}

Let  $Y_i$, $i\geq 1$, be the $i.i.d.$ random variables with
 $\Po(\lambda_N)$ distribution. Then we derive, using (\ref{eq:Cv_bound})
and (\ref{ln2}) with  (\ref{ln}), the following upper bound for the probability in  (\ref{LLC}):
\begin{equation}\label{c1}
\begin{split}
{\mathbb{P}}\{C \geq k\} & \leq N^2\P\left\{\sum_{t=1}^{k-1} X_t^+ >k-2 \right\}
\leq N^2 \P \left\{\sum_{t=1}^{k-1}Y_t>k-2 \right\}.
\end{split}
\end{equation}

Using  Chebyshev's inequality in (\ref{c1}), for any $h>0$,  we have that
\begin{equation}\label{c1_end}
\begin{split}
{\mathbb{P}}\{C \geq k\}& \leq N^2 \P \left\{\sum_{t=1}^{k-1}Y_t>k-2 \right\}\\
& \leq \frac{N^2 \prod_{t=1}^{k-1}{\mathbb{E}}e^{h Y_t}}{e^{h(k-2)}}\\
&=N^2 e^{-h(k-2)}\prod_{t=1}^{k-1}e^{\lambda_N(e^h-1)}\\
&=N^2 e^{-h(k-2)}e^{(k-1)\lambda_N(e^h-1)}.
\end{split}
\end{equation}
The last formula attains its minimum at $h=\log\left(\frac{k-1}{k\lambda }\right)$, where it equals to 
\[
N^2e^{k(1-\lambda+\log \lambda)+k o(1)}.
\]
Therefore setting $k=\left(\frac{2}{\lambda-1-\log \lambda}+\varepsilon \right)\log N$ in (\ref{c1_end}),  we find that
(\ref{LLC low}) holds.
\end{proof}

\paragraph{The upper bound.}

To complete the proof of (\ref{LLC conv}) we will prove that for any $\varepsilon > 0$

\begin{equation}\label{LLC upp}
{\mathbb{P}}\left\{\frac{C}{\log N} \leq \frac{2}{1-\lambda-\log \lambda}-\varepsilon\right\}\longrightarrow 0 \,\,\,\,\text{as $N\longrightarrow \infty$}.
\end{equation}

\begin{proof}

Before proceeding to the proof of (\ref{LLC upp}) we 
derive a useful result, which roughly speaking tells us 
that removing an arbitrary set of 
$o(N^2)$ vertices from ${V}_N$ 
does not change some, asymptotically as $N \to \infty$, the expected degree of a vertex.

\begin{lemma}
\label{lem:subtract}
Let $n_r$, $r=1,\dots, N$, with $0 \leq n_r \leq N_r$, be an arbitrary sequence 
such that  $$\sum_{r=1}^{N}n_r=o(N^2).$$ Then  
\begin{equation}\label{on2}
\frac{1}{N}\sum_{r=1}^{N}\frac{n_r}{r}\rightarrow 0\quad \text{as $N\rightarrow \infty$}.
\end{equation}
\end{lemma}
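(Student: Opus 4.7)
\medskip

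\noindent\textbf{Proof proposal for Lemma~\ref{lem:subtract}.}

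The plan is to split the sum at a cut-off $R=R(N)$, to be chosen as a function of how fast $\sum_r n_r$ is $o(N^2)$, and to treat the two resulting pieces by different arguments. For the small-$r$ part we will use the pointwise bound $n_r/r\leq N_r/r$, which, thanks to (\ref{ndr}) and (\ref{ndr2}), is uniformly bounded (by $4$, up to negligible boundary terms, since $N_r\leq 4\min(r,N-r)\leq 4r$); for the large-$r$ part we will discard the denominator by replacing $1/r$ with $1/R$ and feed in the budget $\sum_r n_r=o(N^2)$.

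More precisely, write $\sum_{r=1}^{N} n_r=\alpha_N N^2$ with $\alpha_N\to 0$ (if this sum stays bounded the claim is trivial). Fix some $R=R(N)$ with $R\to\infty$ and $R/N\to 0$; then the small-$r$ contribution satisfies
\begin{equation*}
\frac{1}{N}\sum_{r=1}^{R}\frac{n_r}{r}\leq \frac{1}{N}\sum_{r=1}^{R}\frac{N_r}{r}\leq \frac{4R}{N}\longrightarrow 0,
\end{equation*}
while the large-$r$ contribution obeys
\begin{equation*}
\frac{1}{N}\sum_{r=R+1}^{N}\frac{n_r}{r}\leq \frac{1}{RN}\sum_{r=R+1}^{N}n_r\leq \frac{\alpha_N N}{R}.
\end{equation*}
To make both bounds vanish we balance them by taking $R=\lceil\sqrt{\alpha_N}\,N\rceil$: since $\alpha_N\to 0$, we get $R/N=\sqrt{\alpha_N}\to 0$ and simultaneously $\alpha_N N/R=\sqrt{\alpha_N}\to 0$, which completes the proof.

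The argument is essentially a one-line calculation once one spots the pointwise estimate $N_r/r\leq 4$, so there is no real obstacle; the only mild subtlety is choosing $R$ in terms of the a priori unknown rate $\alpha_N$ at which $\sum_r n_r/N^2$ vanishes, and this is resolved by the balancing $R\sim\sqrt{\alpha_N}\,N$ above.
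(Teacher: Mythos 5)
Your proof is correct, and it takes a different (and more direct) route than the paper. The paper argues by contradiction: assuming $\frac{1}{N}\sum_r n_r/r\geq c$ along a subsequence, it splits the index set according to whether $n_r\geq \delta r$ or not, uses the same key bound $n_r/r\leq N_r/r\leq 4$ on the "heavy" indices to show that the set $\{r: n_r\geq \delta r\}$ must contain a positive fraction of $\{1,\dots,N\}$, and then observes that summing $n_r\geq\delta r$ over that many indices already forces $\sum_r n_r\geq \mathrm{const}\cdot N^2$, contradicting the hypothesis. You instead split by the size of $r$ at a cut-off $R$ and balance the two bounds $4R/N$ and $\alpha_N N/R$ by taking $R\sim\sqrt{\alpha_N}\,N$; this is a direct argument and has the small additional benefit of yielding an explicit rate $O(\sqrt{\alpha_N})$ for the convergence, which the paper's contrapositive argument does not produce. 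The only cosmetic points are that you should guard the degenerate case $\alpha_N=0$ (where all $n_r=0$ and there is nothing to prove, so $R$ need not be defined) and that the preliminary requirement $R\to\infty$ is not actually needed — only $R/N\to 0$ and $\alpha_N N/R\to 0$ matter. Neither affects the validity of the proof.
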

\begin{proof}
We prove the lemma by contradiction. Assume there exists a constant $c > 0$ such that for any $M \in \mathbb{N}$ there exists $N \geq M$ such that 
\begin{equation}
\frac{1}{N}\sum_{r=1}^{N}\frac{n_r}{r}\geq c.
\label{eq:lim_neg}
\end{equation}

Let $ 0 < \delta < \min\{4,c\}$ and define the sets $\mathcal{N}_\delta$ and its complementary $\overline{\mathcal{N}}_\delta$ as follows
\begin{equation*}
\begin{split}
&\mathcal{N}_\delta=\{r \in \{ 1,\dots, N \} : n_r \geq \delta r \} \\
&\overline{\mathcal{N}}_\delta=\{r \in \{ 1,\dots, N \} : n_r < \delta r   \}.
\end{split}
\end{equation*}
Noting that from (\ref{ndr})-(\ref{ndr2}) we have $n_r \leq N_r \leq 4 r$, for any $0 \leq r \leq N$, from (\ref{eq:lim_neg}) it follows that
\begin{equation*}
\begin{split}
c & \leq  \frac{1}{N}\sum_{r=1}^{N} \frac{n_r}{r} = \frac{1}{N} \left( \sum_{r \in \mathcal{N}_\delta} \frac{n_r}{r} + \sum_{r\in \overline{\mathcal{N}}_\delta } \frac{n_r}{r} \right)\\
& \leq \frac{1}{N} \left( \sum_{r\in \mathcal{N}_\delta} 4 + \sum_{r\in \overline{\mathcal{N}}_\delta } \delta  \right)\\
& = \frac{1}{N} \left( 4 |N_\delta| + \delta \overline{\mathcal{N}}_\delta \right) \\
& = \delta + \frac{4-\delta}{N} |N_\delta|.
\end{split}
\end{equation*}
In particular, we have
\begin{equation*}
	|N_\delta| \geq \frac{c-\delta}{4-\delta} N,
%\label{eq:}
\end{equation*}
and therefore
\begin{equation*}
\sum_{r = 1}^{N} n_r \geq \sum_{r\in \mathcal{N}_\delta} n_r \geq \sum_{r\in \mathcal{N}_\delta} \delta r \geq \delta \sum_{r=1}^{|\mathcal{N}_\delta|} r  \geq \frac{\delta}{2} |\mathcal{N}_\delta|^2
\geq \frac{\delta}{2} \left( \frac{c-\delta}{4-\delta} \right)^2 N^2,
\end{equation*}
which contradicts the assumptions.
\end{proof}

Now we can prove (\ref{LLC upp}). 
We shall follow the construction used already in \cite{T2}.
For any vertex $v$ let $V(C_v)$ 
denote here the set of vertices of the component $C_v$.
Observe that $G_N$ can be decomposed into pairwise disjoint connected components
as follows. Set $\tilde{v}_1 = v $. Then, given $C_{\tilde{v}_1}, \ldots, 
C_{\tilde{v}_k},
$ for $k\geq 1$
choose a vertex $\tilde{v}_{k+1}$ 
uniformly in ${V}_N \setminus \cup_{i=1}^k
V(C_{\tilde{v}_i})$, unless the last set is empty, in which case we stop the algorithm.
 The graph $G_N$ is thus 
decomposed into 
pairwise disjoint components
 $C_{\tilde{v}_1}, \dots , C_{\tilde{v}_M}$, with $M=M(N)$. Note that here $M$ is a bounded random variable, $1\leq M \leq N^2$, denoting the number of disjoint connect components  in $G_N$. 

Fix $\varepsilon>0$ arbitrarily  and  denote  
$K=\left(\frac{2}{\lambda-1-\log \lambda}+\varepsilon \right)\log N$. Then we define the event
\begin{equation*}
 E_N=\{C\leq K\}.
%\label{eq:}
\end{equation*}
Recall that it 
 follows
from (\ref{LLC low}) that $$\P \{\bar{E}_N\}\rightarrow 0$$ as $N\rightarrow \infty$. This yields
\begin{equation}\label{c_m}
\begin{split}
\P \{C\leq k \} & =  \P \{ |C_{\tilde{v}_1}| \leq k,\dots, | C_{\tilde{v}_M}| \leq k \}\\
& \leq \P \{ |C_{\tilde{v}_1}| \leq k,\dots, |C_{\tilde{v}_M}| \leq k | E_N \} + o(1).
\end{split}
\end{equation}
Note that since conditionally on $E_N$ the largest connected component is smaller than $K$, it follows that $MK\geq N^2$. Hence, 
 for any $$M_1\leq \frac{N^2}{K}\leq  M$$ the following bound holds from the probability in 
  (\ref{c_m}):
\begin{equation}
\label{eq:prod}
\begin{split}
&\P \{ |C_{\tilde{v}_1}| \leq k,\dots, |C_{\tilde{v}_M}| \leq k| E_N\}
\\
&\leq  \prod_{i=1}^{M_1} \left. \P \{ |C_{\tilde{v}_i}| \leq k \right| |C_{\tilde{v}_1}| \leq k,\dots, |C_{\tilde{v}_{i-1}}| \leq k,E_N \}.
\end{split}
\end{equation}

Let $\mathcal{V}_0$ be an arbitrary  set of  $M_1 K$ nodes, 
$u$ be a vertex in ${V}_N \setminus \mathcal{V}_0$,
 and let $\tilde{C}_u = \tilde{C}_u(\mathcal{V}_0)$ denote 
the connected component containing $u$ 
constructed 
precisely as the original $C_v$
 but on the vertex set $ {V}_N \setminus \mathcal{V}_0$. 

Then, each factor in eq. (\ref{eq:prod}) can be uniformly bounded as follows
\begin{equation*}
\P \{ |C_{\tilde{v}_i}| \leq k| |C_{\tilde{v}_1}| \leq k,\dots, |C_{\tilde{v}_{i-1}}| \leq k,E_N \} \leq \max_{ \substack{\mathcal{V}_0\subseteq {V}_N:|\mathcal{V}_0|= M_1K\\ u\in {V}_N\setminus \mathcal{V}_0}} \P \{ |\tilde{C}_u| \leq k \},
\end{equation*}   
where we simply used the fact that on a smaller set of vertices the components are smaller as well. 
Therefore, from (\ref{eq:prod}) it follows that
\begin{equation}\label{c1l}
\begin{split}
\P \{C\leq k \}&  \leq \left(\max_{ \substack{\mathcal{V}_0:|\mathcal{V}_0|=M_1K\\ u\in {V}_N\setminus \mathcal{V}_0}} \P \{ |\tilde{C}_u| \leq k \} \right)^{M_1}.\\
\end{split}
\end{equation}

In the following, fix set $\mathcal{V}_0$ arbitrarily but so that 
 $|\mathcal{V}_0| = M_1K=o(N^2)$. Fix a  vertex $u \not\in \mathcal{V}_0$ arbitrarily, and construct the
 component $\tilde{C}_u$  on the vertex set $V_N \setminus \mathcal{V}_0$ 
as it is described in the exploration algorithm. Let us denote here $u_1, u_2, \ldots, $ the sequence of saturated vertices. 

Define $n_r^0(u)$  to be the number of nodes in $\mathcal{V}_0$
which are at distance $r$ from $u$, so that $0 \leq n_r^0(u) \leq N_r$ and $\sum_{r=1}^{N} n_r^0(u) = |\mathcal{V}_0|$ for any $u$.

In analogy with the notion used previously, let here $\tilde{U}_{i}$ denote the number of active and saturated vertices at step $i$ in the exploration process (cfr. eq. (\ref{eq:used_v})),  and $\tilde{U}_{i,r}(w)$ be the number 
of those vertices
at distance $r$ from the  vertex $w$. Let then $n_{i,r}^0=n_r^0(u_i)$ denote
the number of those vertices in $\mathcal{V}_0$ which are
at distance $r$ from the $i$-th saturated vertex $u_i$. By this definition, and our assumption on $|\mathcal{V}_0| = o(N^2)$ we have
\begin{equation}\label{As1}
 \sum_{r=1}^N n_{i,r}^0 = |\mathcal{V}_0|= M_1K= o(N^2)
\end{equation}
for any $i$. 
Hence, the number of vertices activated at step $i$ 
at distance $r$ from the $i$-th explored vertex, which we denote 
$\tilde{X}_{i,r} $,
 has the following distribution
\begin{equation*}
	\tilde{X}_{i,r} \in \Bin (N_r-n_{i,r}^0-\tilde{U}_{i-1,r}(u_i),p_r),
\end{equation*}
and the total number of vertices activated at the $i$-th step is given by
\begin{equation*}
	\tilde{X}_i = \sum_{r=1}^N \tilde{X}_{i,r} .
%\label{eq:}
\end{equation*}

 Using these definitions we derive
\begin{equation}
\P \{ |\tilde{C}_{u}|>k \} \geq \P \{ \tilde{X}_1+\tilde{X}_2+\dots +\tilde{X}_t>t-1, \, \forall t\leq k-1 \}.
\label{eq:bp_ineq}
\end{equation}
To approximate the distribution of $\tilde{X}_i$ is the last formula, let us define, given numbers 
$0 \leq k_{i,r} \leq N_r-n_{i,r}^0$, $r=1,\dots,N$, $i=1,\dots, k-1$, such that 
\begin{equation}\label{As2}
\sum_{r=1}^N k_{i,r} \leq k, 
\end{equation}
the following  Poisson random variables 
\begin{equation*}
\tilde{Z}_{i,r} \in \Po((N_r-n_{i,r}^0-k_{i,r})p_r).
\end{equation*}
Then
\begin{equation}\label{O23}
\tilde{Z}_i = \sum_{r=1}^N \tilde{Z}_{i,r} \in \Po(\lambda_{i,N}),
\end{equation}
where
\begin{equation}\label{lain}
\lambda_{i,N}=\sum_{r=1}^N (N_r-n_{i,r}^0-k_{i,r})p_r.
\end{equation}
 
To simplify the notations define the event 
\begin{equation}\label{DefF}
F_N=\{\tilde{U}_{i,r}=k_{i,r}, \quad \sum_{r=1}^N k_{i,r}\leq k,\quad  \forall i\leq k-1, \forall r=1,\dots ,N \}
\end{equation}
and consider 
\begin{equation*}
\begin{split}
\P(\tilde{Z}_i>k| F_N)&=\P(\tilde{Z}_i>k, \tilde{Z}_i=\tilde{X}_i| F_N)\\
&+\P(\tilde{Z}_i>k, \tilde{Z}_i\neq \tilde{X}_i| F_N)\\
&\leq \P(\tilde{X}_i>k| F_N)+ \P( \tilde{Z}_i\neq \tilde{X}_i| F_N).
\end{split}
\end{equation*}
Note that
\begin{equation*}
\begin{split}
 \P\{\tilde{X}_i \neq \tilde{Z}_i |F_N\} & = \P\{\sum_{r=1}^{N} \tilde{X}_{i,r} \neq \sum_{r=1}^{N}\tilde{Z}_{i,r}|F_N \}\\
&\leq \P\{\bigcup_{r=1}^N \{\tilde{X}_{i,r} \neq \tilde{Z}_{i,r} \}|F_N \}\\
&\leq \sum_{r=1}^{N} \P\{\tilde{X}_{i,r} \neq \tilde{Z}_{i,r}|F_N \}.
\end{split}
\end{equation*}
We shall use the following result from \cite{rvdh2}.

\begin{lemma}\label{pobin}\cite{rvdh2} (Poisson limit for binomial random variable)
$X\eqd \Bin (n,\lambda/n)$, $Y\eqd \Po (\lambda)$ then the following holds
\[
\P(X\neq Y)\leq \frac{\lambda^2}{n}.
\]
\end{lemma}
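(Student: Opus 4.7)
The plan is to prove this via an explicit single-coordinate coupling and a union bound, which is the standard route for such a Poisson approximation estimate. Write $X = \sum_{i=1}^n X_i$ with $X_i$ i.i.d.\ $\Be(p)$, where $p := \lambda/n$, and construct a \emph{joint} law on independent pairs $(X_i, Y_i)$ with $Y_i \sim \Po(p)$. Since independent Poissons are closed under convolution, $Y := \sum_{i=1}^n Y_i \sim \Po(\lambda)$, matching the required marginal. The problem is thereby reduced to controlling the per-coordinate mismatch probability $\P(X_i \neq Y_i)$.

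For the per-coordinate coupling I would use a single uniform variable $U_i$ on $[0,1]$ together with inverse CDFs: set $X_i = \mathbf{1}\{U_i > 1-p\}$ and let $Y_i = k$ precisely on the interval $\bigl(\sum_{j<k} e^{-p}p^j/j!,\ \sum_{j\le k} e^{-p}p^j/j!\bigr]$. The key input is the elementary inequality $e^{-p} \ge 1-p$, which gives the inclusion $\{X_i = 0\} \subseteq \{Y_i = 0\}$. A direct inspection of the partition of $[0,1]$ then shows that $X_i \neq Y_i$ happens exactly on $U_i \in (1-p,\, e^{-p}] \cup (e^{-p}(1+p),\, 1]$, so
\[
\P(X_i \neq Y_i) = \bigl[e^{-p} - (1-p)\bigr] + \bigl[1 - e^{-p}(1+p)\bigr] = p\bigl(1 - e^{-p}\bigr) \le p^2,
\]
where the last step uses $1-e^{-p} \le p$.

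A union bound over the $n$ independent coordinates closes the argument:
\[
\P(X \neq Y) \;\le\; \sum_{i=1}^n \P(X_i \neq Y_i) \;\le\; n p^2 \;=\; \frac{\lambda^2}{n}.
\]
I do not expect any genuine obstacle here; the only nontrivial ingredient is the inequality $e^{-p} \ge 1-p$, which is precisely what makes the nested-event structure of the inverse-CDF coupling work and keeps the per-coordinate mismatch probability quadratic in $p$.
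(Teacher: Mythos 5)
Your coupling argument is correct: the per-coordinate inverse-CDF coupling gives $\P(X_i\neq Y_i)=p(1-e^{-p})\le p^2$, and the union bound over the $n$ coordinates yields $\lambda^2/n$. The paper does not prove this lemma itself but simply cites it from van der Hofstad's lecture notes, and your proof is essentially the standard argument given there.
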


By Lemma \ref{pobin} we have
\begin{equation*}
	\P\{\tilde{X}_{i,r} \neq \tilde{Z}_{i,r} |F_N\} \leq p_r^2(N_r-n_{i,r}^0-k_{i,r}),
%\label{eq:}
\end{equation*}
which gives us
\begin{equation*}
\begin{split}
\P\{\tilde{X}_i \neq \tilde{Z}_i |F_N\} & \leq \sum_{r=1}^{N}p_r^2(N_r-n_{i,r}^0-k_{i,r})\\
&=\frac{c^2}{N^2}\sum_{r=1}^{N}\frac{1}{r^2}(N_r-n_{i,r}^0-k_{i,r})
= O\left(\frac{\log N}{N^2}\right),
\end{split}
\label{eq:neq}
\end{equation*}
uniformly in $i$.

Next we consider
\begin{equation*}
	\begin{split}
		\P\{ \tilde{Z}_1 + \dots + \tilde{Z}_{t} & > t-1, \forall t \leq k-1 \} 
\\ 
		& \leq \P\{ \tilde{X}_1 + \dots + \tilde{X}_{t} > t-1, \forall t \leq k-1|F_N \}\\ 
		& + \sum_{s=1}^{k-1} \P\{\tilde{X}_s \neq \tilde{Z}_s |F_N\}.
	\end{split}
\label{eq:O21}
\end{equation*}
Note that by  (\ref{eq:neq})
\begin{equation}
	\varepsilon_k(N): = \sum_{s=1}^{k-1} \P\{\tilde{X}_s \neq \tilde{Z}_s |F_N\} = O\left(\left(\frac{\log N}{N}\right)^2\right),
\label{eq:error_up}
\end{equation}
therefore (\ref{eq:O21}) yields
\begin{equation}
	\P\{ \tilde{X}_1 + \dots + \tilde{X}_{t} > t-1, \forall t \leq k-1 |F_N\} 
\label{eq:O22}
\end{equation}
\[
\geq \P\{ \tilde{Z}_1 + \dots + \tilde{Z}_{t}  > t-1, \forall t \leq k-1 \} - \varepsilon_k(N).
\]

In the following we construct a random variable $\tilde{Z}^-$ 
is stochastically smaller than any of 
the random variables $\tilde{Z}_i$, $k\leq i \leq k.$

First, using the assumption (\ref{As1}) and  Lemma \ref{lem:subtract} we derive
\begin{equation}\label{1lin}
\sum_rn_{i,r}^0p_r=\frac{c}{N}\sum_r\frac{n_{i,r}^0}{r} =o(1).
\end{equation}
From now on we shall assume that 
\begin{equation}\label{As3}
k = a \log N
\end{equation}
for some positive $a$. Under this assumption 
we have 
\begin{equation}\label{2lin}
\sum_rk_{i,r}p_r=\frac{c}{N}\sum_{r=1}^{N}\frac{k_{i,r}}{r}\leq \frac{c}{N}\sum_rk_{i,r}=\frac{kc}{N}=\frac{ac\log N}{N} =o(1).
\end{equation}
Hence, from (\ref{1lin}) and  (\ref{2lin}) we obtain the following bound for 
$\lambda_{i,N}$ defined in (\ref{lain}) 
\begin{equation}\label{lln}
\lambda_{i,N}=\sum_{r=1}^N (N_r-n_{i,r}^0-k_{i,r})p_r\geq 
\sum_{r=1}^N N_r p_r +o_i(1),
\end{equation}
where $o_i(1)$ might depend on $i$.
Note that by (\ref{ln}) 
\begin{equation}\label{O27}
\sum_{r=1}^N N_r p_r =\lambda +o(1).
\end{equation}
Hence, for any (constant) 
\begin{equation}\label{O28}
\lambda '<\lambda
\end{equation}
(\ref{lln}) together with (\ref{O27})  yields the following uniform in $i\leq k$ bound
\begin{equation}\label{O29}
\lambda_{i,N}>\lambda '.
\end{equation}

Recall that $\tilde{Z}_i\in \Po(\lambda_{i,N})$ by (\ref{O23}). Therefore (\ref{O29})
allows us to construct independent 
 $\tilde{Z}_i^-\in \Po(\lambda ')$, $1\leq i \leq k$, such that
\begin{equation}\label{O24}
\tilde{Z}_i^- \leq \tilde{Z}_i
\end{equation}
$a.s.$ for each $i$.
Now we can  derive  the following bound
\begin{equation}
\begin{split}
	\P(\tilde{Z}_1  +\dots, \tilde{Z}_{t} & > t-1, t=1, \dots, k-1) \\
	& \geq \P(\tilde{Z}_1^- + \dots + \tilde{Z}_{t}^- > t-1, t=1, \dots, k-1)\\
	& = \P\{{\cal T}\geq k \},
\end{split}
\label{O25}
\end{equation}
where  ${\cal T}$ denotes the total progeny of a 
branching process with offspring distribution Po$(\lambda')$.
Substituting this bound into (\ref{eq:O22}) we obtain
\begin{equation}
	\P\{ \tilde{X}_1 + \dots + \tilde{X}_{t} > t-1, \forall t \leq k-1 |F_N \} \geq 
\P \{ {\cal T} \geq k \} - \varepsilon_k(N),
\label{O26}
\end{equation}
where the right-hand side is uniform in  $F_N $ (where we assume only 
 conditions in eqs. (\ref{As2}) and  (\ref{As3})). This yields
\begin{equation*}
	\P\{ \tilde{X}_1 + \dots + \tilde{X}_{t} > t-1, \forall t \leq k-1 \} \geq \P\{ {\cal T} \geq k \} - \varepsilon_k(N),
%\label{eq:}
\end{equation*}
and therefore, by  eq. (\ref{eq:bp_ineq}),
\begin{equation}
	\P\{|\tilde{C}_u| \leq  k \}\leq 1-\P\{ {\cal T}  \geq k\}+\varepsilon_k(N).
\label{eq:C_branch_bound}
\end{equation}

Using a well known formula for the distribution of the progeny of a branching process (see e.g. \cite{or}), we compute
\begin{equation*}
	\P\{{\cal T} \geq k  \}=\sum_{j=k}^{\infty}\frac{(\lambda' j)^{j-1}}{j!}e ^{\lambda' j}\geq \frac{(\lambda' k)^{k-1}}{k!}e ^{\lambda' k},
\end{equation*}
which, together with the Stirling formula, gives us
\begin{equation}\label{T}
	\P\{{\cal T}  \geq k  \} \geq \frac{1}{\sqrt{2 \pi}\lambda'}\frac{1}{ k^{3/2}} e^{-\alpha k}\left( 1+O\left(\frac{1}{k}\right) \right),
\end{equation}
where 
\begin{equation}\label{alf}
	\alpha = \lambda'-1-\log \lambda'.
\end{equation}

Substituting  eq. (\ref{T}) into  eq. (\ref{eq:C_branch_bound}), we obtain for
$k=a \log N$ that 
\begin{equation}\label{O30}
\begin{split}
	\P \{ |\tilde{C}_u| \leq a \log N \} & \leq 
	1	- \frac{1}{A_N}  \left(1+o\left(1\right)\right)+O \left( \left( \frac{\log N}{N}\right)^2 \right),\\
\end{split}
\end{equation}
where
\begin{equation}
	A_N = \sqrt{2\pi }\lambda' (a \log N)^{3/2} N^{a \alpha}.
%\label{eq:}
\end{equation}
Choose now arbitrarily a constant 
\begin{equation}\label{a1}
a<\frac{2}{\alpha}.
\end{equation}
Then, eq. (\ref{O30}) yields
\begin{equation}\label{O31}
	\P \{ |\tilde{C}_u| \leq a \log N \}  \leq  1- \frac{1}{A_N}  \left(1+o\left(1\right)\right).
\end{equation}
Observe that the value on the right in the last formula is uniform in the choice of the set ${\mathcal V}_0$ and vertex $u$. Therefore we can use the bound in eq. (\ref{O31}) 
to obtain, from eq. (\ref{c1l}),
\begin{equation}\label{O32}
	\P\{ C \leq a \log N \} \leq \left(1- \frac{1}{A_N} \left(1+o\left(1\right)\right)\right)^{M_1}.
\end{equation}
Finally,  we choose
\[M_1=A_N\log N \gg A_N,\]
which by (\ref{a1}) satisfies as well  the condition  (\ref{As1}), i.e., 
\[M_1 K = o(N^2),\]
where 
\[K=\left(\frac{2}{\lambda-1-\log \lambda} +\varepsilon\right)\log N .\]
With this choice of $M_1$ bound  (\ref{O32}) implies
\begin{equation}\label{O33}
\P \{ C \leq a \log N \}  = o(1)
\end{equation}
for any fixed constant (see eqs(\ref{a1}) and (\ref{alf}))
\[
a<\frac{2}{\alpha}=\frac{2}{\lambda'-1-\log \lambda'}.
\]
Since by (\ref{O28}) here we can choose any $\lambda'<\lambda$, it follows that 
 (\ref{O33}) holds for any 
\[
a<\frac{2}{\lambda-1-\log \lambda}.
\]
This proves (\ref{LLC upp}), and therefore part $i)$ of Theorem \ref{T1} is proved. 
\end{proof}

This completes the proof of Theorem \ref{T1}, since part $ii)$ follows by Theorem \ref{T2}.

\subsection{The idea of proof  of  Theorem \ref{T3}}

A proof of Theorem \ref{T3} can
 run precisely the lines of the proof of part $i)$ of Theorem \ref{T1} in a 
combination with the proof of a corresponding result for the rank-1 model (\ref{pr1}) given in \cite{T2}. Therefore, we can omit the details here.

\end{document}